\newcommand{\keywords}[1]{\par\addvspace\baselineskip
\noindent\keywordname\enspace\ignorespaces#1}
\newcommand{\ZZ}{\mathbb{Z}}			% The set of integers
\newcommand{\NN}{\mathbb{N}}			% The set of natural numbers
\newcommand{\family}[1]{%				% A family
	\mathcal{#1}%
}
\newcommand{\isdef}{\triangleq}			% "Is Defined" symbol
\newcommand{\abs}[1]{%					% Absolute value
	\left\lvert#1\right\rvert%
}
\newcommand{\xPr}{\mathbb{P}}			% Probability
\newcommand{\diff}{%					% Difference set
	\operatorname{\mathrm{diff}}%
}
\newcommand{\symb}[1]{\mathtt{#1}}		% Symbol
\newcommand{\unif}[1]{\underline{#1}}	% A uniform configuration
\newcommand{\maj}{%						% Majority
	\operatorname{\mathrm{maj}}%
}
\newcommand{\critical}{\textrm{c}}							% subscript for critical values
\begin{document}

\mainmatter  % start of an individual contribution

\title{%
	Restricted density classification in one dimension
%	\footnotetext{%
%		Last update: \today
%	}
}
\titlerunning{%
	Restricted density classification
	%Density classification in $1$D
}

\author{%
	Siamak Taati
%	\thanks{%
%		Research supported by ERC Advanced Grant 267356-VARIS of Frank den Hollander.
%	}
}
\authorrunning{S.~Taati}

\institute{%
	Mathematics Institute, Leiden University\\
	P.O.~Box 9512, 2300 RA Leiden, The Netherlands}

\toctitle{Lecture Notes in Computer Science}
\tocauthor{Authors' Instructions}
\maketitle

\begin{abstract}
	The density classification task is to determine which of the symbols appearing
	in an array has the majority.
	A cellular automaton solving this task is required to converge to a uniform configuration
	with the majority symbol at each site.
	It is not known whether a one-dimensional cellular automaton with binary alphabet
	can classify all Bernoulli random configurations almost surely according to their densities.
	We show that any cellular automaton that washes out finite islands
	in linear time classifies all Bernoulli random configurations
	with parameters close to $0$ or $1$ almost surely correctly.
	The proof is a direct application of a ``percolation'' argument
	which goes back to G\'acs (1986).
\keywords{cellular automata, density classification, phase transition, spareness, percolation}
\end{abstract}

\section{Introduction}
An array containing symbols $\symb{0}$ and $\symb{1}$ is given.
We would like to determine which of the two symbols $\symb{0}$ and $\symb{1}$
appears more often in this array.
The challenge is to perform this task in a local, uniform and decentralized fashion,
that is, by means of a cellular automaton.
A cellular automaton solving this problem is to receive the input array as its
initial configuration and to end by reaching a consensus, that is,
by turning every symbol in the array into the majority symbol.
All computations must be done on the same array with no additional symbols.

If we require the cellular automaton to solve the task for
all odd-sized finite arrays with periodic boundary conditions
(i.e., arrays indexed by a ring $\ZZ_n$ or a $d$-dimensional torus $\ZZ_n^d$,
where $n$ is odd), then no perfect solution exists~\cite{LanBel95} (see also~\cite{BusFatMaiMar13}).
Indeed, the effect of an isolated $\symb{1}$ deep inside a large region of $\symb{0}$'s
will soon disappear, hence its removal from the starting configuration
should not affect the end result.  However, removing such an isolated $\symb{1}$
could shift the balance of the majority from $\symb{1}$ to $\symb{0}$
in a borderline case.

Here, we consider a variant of the problem on infinite arrays, %(studied in~\cite{BusFatMaiMar13}),
and focus on the one-dimensional case.
We ask for a cellular automaton that classifies
a randomly chosen configuration (say, using independent biased coin flips)
according to its density \emph{almost surely} (i.e., with probability~$1$).
We relax the notion of classification to allow computations that take infinitely long:
we only require that the content of each site is eventually turned into
the majority symbol and remains so forever, but we allow the fixation time
to depend on the site.

Almost sure classification of random initial configurations
is closely related to the question of stability of cellular automata trajectories against noise
and the notion of ergodicity for probabilistic cellular automata.
Constructing a cellular automaton with at least two distinct trajectories
that remain distinguishable %(by means of suitable statistics)
in presence of positive Bernoulli noise is far from trivial.
Toom~\cite{Too74,Too80} produced a family of examples in two dimensions.
Each of Toom's cellular automata has two or more %$n\geq 2$
distinct fixed points that are stable against noise:
in presence of sufficiently small (but positive) Bernoulli noise,
the cellular automaton starting from each of these fixed points remains
close to that fixed point for an indefinite amount of time.
The noisy version of each of these cellular automata is thus non-ergodic in that it has
more than one invariant measure.

The most well-known of Toom's examples is the so-called \emph{NEC} rule
(NEC standing for North, East, Center).
The NEC rule replaces the symbol at each site
with the majority symbol among the site itself and its north and east neighbors.
Combining the combinatorial properties of the NEC rule and well-known results
from percolation theory, Bu\v{s}i\'c, Fat\`es, Mairesse and Marcovici~\cite{BusFatMaiMar13} showed that
the NEC cellular automaton also solves the classification problem:
starting from a random Bernoulli configuration with parameter $p$ on $\ZZ^2$
(i.e., using independent coin flips with probability $p$
of having $\symb{1}$ at each site),
the cellular automaton
converges almost surely to the uniform configuration $\unif{\symb{0}}$
if $p<1/2$ and to $\unif{\symb{1}}$ if $p>1/2$.

The situation in dimension one is more complicated.
No one-dimensional cellular automaton with binary alphabet
is known to classify Bernoulli random configurations.
Moreover, Toom's examples do not extend to one dimension;
the only example of a one-dimensional cellular automaton
with distinct stable trajectory in presence of noise
is a sophisticated construction due to G\'acs~\cite{Gac86,Gac01}
based on error-correction and self-simulation,
which uses a huge number of symbols per site.

There are however candidate cellular automata in one dimension
that are suspected to both classify Bernoulli configurations
and to remain bi-stable in presence of noise.
The oldest, most studied candidate is
the \emph{GKL} cellular automaton, introduced by
G\'acs, Kurdyumov and Levin~\cite{GacKurLev78}.
Another candidate with similar properties and same degree of simplicity
is the \emph{modified traffic} cellular automaton
studied by K\r{u}rka~\cite{Kur03} and Kari and Le Gloannec~\cite{KarGlo12}.
Both of these two automata have the important property that
they ``wash out finite islands of errors''
on either of the two uniform configurations~$\unif{\symb{0}}$ and $\unif{\symb{1}}$~\cite{GonMae92,KarGlo12}.
In other words, each of the two uniform configurations~$\unif{\symb{0}}$ and $\unif{\symb{1}}$
is a fixed point that attracts
all configurations that differ from it at no more than finitely many sites.
Incidentally, this same property is also shared among Toom's cellular automata,
and is crucial (but not sufficient) for
its noise stability and density classification properties.

A cellular automaton that washes out finite islands of errors,
also washes out infinite sets of errors that are sufficiently sparse.
In this context, a set should be considered sparse if it can be
covered with disjoint finite islands that are washed out before
sensing the effect of (or having an effect on) one another.
It turns out that a Bernoulli random configuration with sufficiently small parameter
is sparse with probability~$1$.
The proof is via a beautiful and relatively simple argument
that goes back to G\'acs~\cite{Gac86,Gac01},
who used it to take care of the probabilistic part of his result.
The author has learned this argument in a more streamlined form from
a recent paper of Durand, Romashchenko and Shen~\cite{DurRomShe12},
who used it in the context of aperiodic tilings.
Given its simplicity and potential, we shall repeat this argument below.

An immediate consequence of the sparseness of low-density Bernoulli sets
is that any cellular automaton that
washes out finite islands of errors on~$\unif{\symb{0}}$ and~$\unif{\symb{1}}$
(e.g., GKL and modified traffic) almost surely classifies
a Bernoulli random configuration correctly, as long as the
Bernoulli parameter $p$ is close to either $0$ or $1$.
It remains open whether the same classification occurs
for all values of $p$ in $(0,1/2)\cup(1/2,1)$.

\subsection{Terminology}
Let us proceed by fixing the terminology and formulating the problem more precisely.
By a \emph{configuration}, we shall mean an infinite array of symbols
$x_i$ chosen from an alphabet $S$
that are indexed by integers $i\in\ZZ$,
or equivalently, a function $x:\ZZ\to S$.
The evolution of a cellular automaton is obtained by
iterating a transformation
$\Phi:S^\ZZ\to S^\ZZ$
on a starting configuration $x:\ZZ\to S$.
The transformation $x\mapsto \Phi x$ is carried out
by applying a \emph{local update rule} $f$ simultaneously on every site
so that the new symbol at site $i$
reads $(\Phi x)_i\isdef f(x_{i-r},x_{i-r+1},\ldots,x_{i+r})$.
We call the sites $i-r,i-r+1,\ldots,i+r$ the \emph{neighbors} of site $i$
and refer to $r$ as the neighborhood \emph{radius} of the cellular automaton.

The \emph{density} of a symbol $a$ in a configuration $x$
is not always well-defined or non-ambiguous.
We take as the definition,
\begin{align}
	\rho_{a}(x) &\isdef
		\lim_{N\to\infty} \frac{\abs{\{i\in[-N,N]: x_i=a\}}}{2N+1}
\end{align}
when the limit exists.
According to the law of large numbers,
the density of a symbol $a$ in a Bernoulli random configuration
is almost surely the same as the probability of occurrence of $a$ at each site.
Formally, if $X$ is a random configuration $\ZZ\to S$
in which the symbol at each site is chosen independently of the others,
taking value $a$ with probability $p(a)$,
then $\xPr\{\rho_a(X)=p(a)\}=1$.

When $S=\{\symb{0},\symb{1}\}$, we simply write $\rho(x)\isdef\rho_{\symb{1}}(x)$
for the density of $\symb{1}$'s in $x$.
We say that a cellular automaton $\Phi:\{\symb{0},\symb{1}\}^\ZZ\to\{\symb{0},\symb{1}\}^\ZZ$
\emph{classifies} a configuration $x:\ZZ\to\{\symb{0},\symb{1}\}$
according to density if $\Phi^t x\to\unif{\symb{0}}$ or $\Phi^t x\to\unif{\symb{1}}$
as $t\to\infty$,
depending on whether $\rho(x)<1/2$ or $\rho(x)>1/2$.
The notation $\unif{a}$ is used to denote a uniform configuration
with symbol $a$ at each site.
For us, the meaning of the convergence of a sequence of configurations
$x^{(1)},x^{(2)},\ldots$ to another configuration $x$
is \emph{site-wise eventual agreement}: for each site $i$, there must be
an index $n_i$ after which all the following configurations in the sequence
agree with $x$ on the content of site $i$.
(Formally, $x^{(n)}_i=x_i$ for all $n\geq n_i$.)
This is the concept of convergence in the product topology of $S^\ZZ$,
which is a compact and metric topology.

\section{Eroder Property}
Let us describe two candidates that are suspected to solve
the density classification problem in one dimension:
the cellular automaton of G\'acs, Kurdyumov and Levin and the modified traffic rule.
Both cellular automata are defined
on binary configurations $\ZZ\to\{\symb{0},\symb{1}\}$
and have neighborhood radius $3$.

The cellular automaton of G\'acs, Kurdyumov and Levin~\cite{GacKurLev78} (\emph{GKL} for short)
is defined by the transformation
\begin{align}
	(\Phi x)_i &\isdef \begin{cases}
		\maj(x_{i-3}, x_{i-1}, x_i)		& \text{if $x_i=\symb{0}$,} \\
		\maj(x_i, x_{i+1}, x_{i+3})		& \text{if $x_i=\symb{1}$,}
	\end{cases}
\end{align}
where $\maj(a,b,c)$ denotes the majority symbol among $a,b,c$.

The \emph{modified traffic} cellular automaton~\cite{Kur03,KarGlo12} is defined
as a composition of two simpler automata:
the traffic automaton followed by a smoothing filter.
The \emph{traffic} automaton transforms a configuration
by replacing every occurrence of $\symb{1}\symb{0}$ with $\symb{0}\symb{1}$.
The follow-up filter replaces the $\symb{1}$ in every occurrence of $\symb{0}\symb{0}\symb{1}\symb{0}$
with~$\symb{0}$, and symmetrically,
turns the $\symb{0}$ in every occurrence of $\symb{1}\symb{0}\symb{1}\symb{1}$
into a~$\symb{1}$.

Sample space-time diagrams of the GKL and the modified traffic automata
are depicted in Figure~\ref{fig:space-time:sample}.
\begin{figure}
	\begin{center}
		\begin{tabular}{ccc}
			\begin{minipage}[c]{0.49\textwidth}
				\centering
				\includegraphics[scale=0.125]{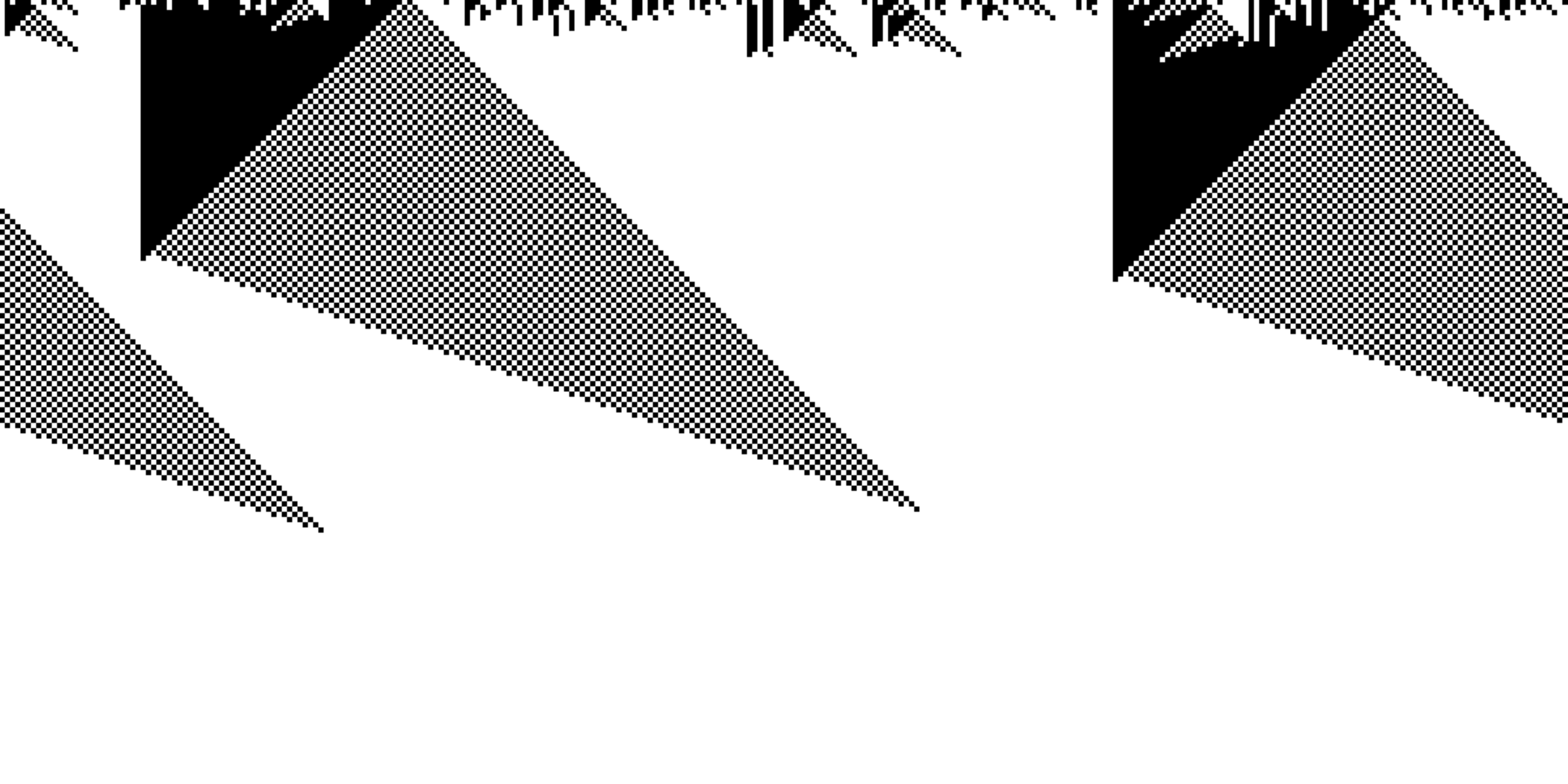}
			\end{minipage}
			& &
			\begin{minipage}[c]{0.49\textwidth}
				\centering
				\includegraphics[scale=0.125]{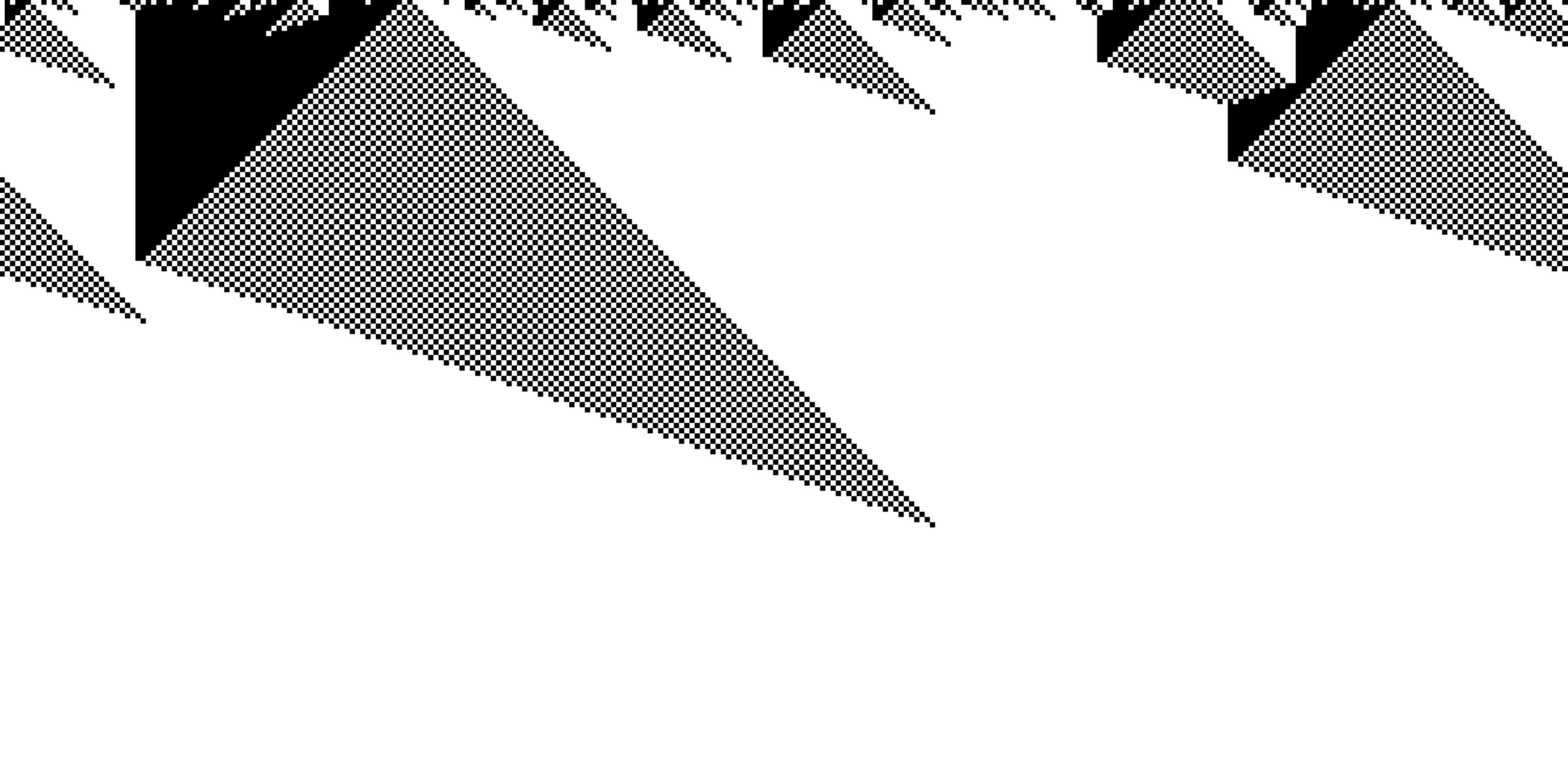}
			\end{minipage}
			\\
			(a) GKL & & (b) modified traffic
		\end{tabular}
	\end{center}
	\caption{
		Finding the majority in a biased coin-flip configuration.
		Time goes downwards.
	}
	\label{fig:space-time:sample}
\end{figure}
Note that both GKL and modified traffic
have the following symmetry: exchanging $\symb{0}$ with $\symb{1}$
and right with left leaves the cellular automaton unchanged.

The uniform configurations $\unif{\symb{0}}$ and $\unif{\symb{1}}$
are fixed points of both GKL and modified traffic automata.
The following theorem states that both automata
wash out finite islands of errors
on either of the two uniform configurations $\unif{\symb{0}}$ and $\unif{\symb{1}}$.
This is sometimes called the \emph{eroder property}. %~\cite{Too04}.
For the GKL automaton, the eroder property was proved by Gonzaga de~S\'a and Maes~\cite{GonMae92};
for modified traffic, the result is due to Kari and Le~Gloannec~\cite{KarGlo12}.
Let us write $\diff(x,y)\isdef \{i\in\ZZ: x_i\neq y_i\}$
for the set of sites at which two configurations $x$ and $y$ differ.
We call $x$ a \emph{finite perturbation} of $z$ if $\diff(z,x)$ is a finite set.
\begin{theorem}[Eroder property~\cite{GonMae92,KarGlo12}]
	Let $\Phi$
	be either the GKL or the modified traffic cellular automaton.
	For every finite perturbation $x$ of $\unif{\symb{0}}$, there is a time $t$
	such that $\Phi^t x=\unif{\symb{0}}$.
	If $\diff(\unif{\symb{0}},x)$ has diameter at most $n$
	(i.e., covered by an interval of length $n$),
	then $\Phi^{2n} x = \unif{\symb{0}}$.
	The analogous statement about finite perturbations of $\unif{\symb{1}}$
	holds by symmetry.
\end{theorem}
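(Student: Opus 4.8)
\medskip
\noindent\emph{Proof strategy.}
By the $\symb{0}\leftrightarrow\symb{1}$, left--right symmetry shared by both rules, it is enough to treat a finite perturbation $x$ of $\unif{\symb{0}}$ and to show that its finitely many $\symb{1}$'s are erased within $2n$ steps; the statement for $\unif{\symb{1}}$ then follows by applying the symmetry. The plan is to track the $\symb{0}/\symb{1}$ interfaces of $x$ (the borders between maximal blocks of equal symbols) under the dynamics, to show that each interface moves with a bounded speed until it meets another, and that meeting interfaces cancel in pairs. Since $\unif{\symb{0}}$ is the unique finite perturbation of $\unif{\symb{0}}$ carrying no interface, proving that all interfaces disappear gives the first assertion, and bounding the interface speeds yields the quantitative claim $\Phi^{2n}x=\unif{\symb{0}}$.

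For the modified traffic rule I would use the decomposition $\Phi=F\circ T$ into the traffic step followed by the filter. The step $T$ conserves the number of $\symb{1}$'s and carries them rigidly to the right: a solid block $\symb{1}\cdots\symb{1}$ keeps its left border pinned while shedding its rightmost cell at each step, so after a few iterations the block dissolves into a right-travelling period-$2$ pattern $\cdots\symb{1}\symb{0}\symb{1}\symb{0}\cdots$. The smoothing filter $F$ is precisely what stops such debris from surviving: the rule $\symb{0}\symb{0}\symb{1}\symb{0}\mapsto\symb{0}\symb{0}\symb{0}\symb{0}$ deletes a $\symb{1}$ that has become isolated, and in particular it consumes the trailing end of every finite period-$2$ patch, while $\symb{1}\symb{0}\symb{1}\symb{1}\mapsto\symb{1}\symb{1}\symb{1}\symb{1}$ does the mirror-image job. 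Tracking the exposed left end of the island, which marches to the right as $F$ repeatedly deletes the isolated leading cell that $T$ produces, one should obtain extinction in a number of steps linear in the diameter.

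For GKL there is no such product structure, so I would analyse the update rule directly at the two borders of a maximal block of $\symb{1}$'s. A short case check shows that, for a wide enough block, the left border (a $\symb{0}\symb{1}$ step) is stationary while the right border recedes to the left by one site per step, ejecting a right-moving period-$2$ patch as it goes; hence a solid block of length $\ell$ is eaten from the right in about $\ell$ steps, and the ejected debris, being again a finite perturbation living on a period-$2$ background, erodes in the same fashion.

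In both cases the crux, and the place I expect the real difficulty, is the bookkeeping of these ejected period-$2$ (checkerboard) patches. They are the reason the obvious candidates for a Lyapunov functional fail: neither the number of $\symb{1}$'s, nor the number of $\symb{1}$-runs, nor the diameter of $\diff(\unif{\symb{0}},x)$ is monotone, because the configuration spreads out and transiently grows before it collapses. The work is therefore to isolate a genuinely monotone quantity and to verify, through a finite local case analysis, that every collision of interfaces is annihilating and that each ejected patch is consumed from its end rather than persisting forever; the uniform bound $2n$ then follows because two interfaces initially at distance at most $n$ close that distance at a bounded rate. This finite case analysis is exactly what is carried out by Gonzaga de~S\'a and Maes~\cite{GonMae92} for GKL and by Kari and Le~Gloannec~\cite{KarGlo12} for modified traffic, so one may alternatively simply appeal to their results.
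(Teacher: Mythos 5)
The paper does not prove this theorem at all: it is imported background, stated with an attribution to Gonzaga de~S\'a and Maes~\cite{GonMae92} (for GKL) and to Kari and Le~Gloannec~\cite{KarGlo12} (for modified traffic). Your final sentence --- that one may simply appeal to those results --- is therefore exactly what the paper itself does, and at that level your proposal matches the paper.

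The preceding sketch should not, however, be mistaken for a proof, and you correctly flag this yourself. Its qualitative picture is right: for modified traffic, the traffic step dissolves a block of $\symb{1}$'s into a right-moving period-$2$ patch whose leading (leftmost) particle is then deleted by the $\symb{0}\symb{0}\symb{1}\symb{0}\to\symb{0}\symb{0}\symb{0}\symb{0}$ filter, so the number of $\symb{1}$'s strictly decreases once the debris has formed; for GKL, a solid block is indeed eaten from the right while ejecting right-moving period-$2$ debris that is itself consumed from its left end. But ``a short case check'' understates the GKL side: for instance, one step applied to $\symb{1}\symb{1}\symb{1}\symb{1}\symb{1}$ on the $\unif{\symb{0}}$ background creates a new $\symb{1}$ immediately to the \emph{right} of the block, so the diameter of $\diff(\unif{\symb{0}},\Phi x)$ transiently grows; this is precisely why none of the obvious monotone quantities (number of $\symb{1}$'s, number of runs, diameter) works for GKL, and why the uniform bound $\Phi^{2n}x=\unif{\symb{0}}$ requires the careful analysis carried out in~\cite{GonMae92}. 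Since you identify this bookkeeping as the crux and defer it to the cited papers rather than claiming to have resolved it, the proposal is acceptable --- but only because the statement is, in the paper as in your write-up, ultimately supported by citation rather than by an in-house proof.
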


Let us emphasize that many simple cellular automata
have the eroder property on some uniform configuration.
For instance, the cellular automaton $\Phi:\{\symb{0},\symb{1}\}^\ZZ\to \{\symb{0},\symb{1}\}^\ZZ$
defined by $(\Phi x)_i\isdef x_{i-1} \land x_i \land x_{i+1}$
washes out finite islands on the uniform configuration $\unif{\symb{0}}$.
What is remarkable about GKL and modified traffic
is the fact that they have the eroder property on
\emph{two distinct} configurations $\unif{\symb{0}}$ and~$\unif{\symb{1}}$.
This double eroder property may lead one to guess that
these two cellular automata could indeed classify Bernoulli configurations
according to density or that the trajectories of
the fixed points $\unif{\symb{0}}$ and $\unif{\symb{1}}$
are stable in presence of small but positive noise.

\section{Washing Out Sparse Sets}
In this section, we consider a slightly more general setting.
We assume that $\Phi:S^\ZZ\to S^\ZZ$ is
a cellular automaton that washes out finite islands of errors
on a configuration $z$ in linear time;
that is, there is a constant $m$ such that
$\Phi^{ml} x = \Phi^{ml} z$ for any finite perturbation of $x$
for which $\diff(z,x)$ has diameter at most~$l$.
For GKL and modified traffic, $z$ can be either $\unif{\symb{0}}$ or $\unif{\symb{1}}$,
which are fixed points (hence $\Phi^{ml} z=z$),
and the constant $m$ can be chosen to be $2$.

The above eroder property automatically implies
that $\Phi$ also washes out (possibly infinite) sets of error
that are ``sparse enough''.  Indeed, an island of errors
which is well separated from the rest of the errors will
disappear before sensing or affecting the rest of the error set. %(see Figure~\ref{fig:washing:isolated}).
We are interested in an appropriate notion of ``sparseness''
for $\diff(z,x)$ that guarantees the attraction of the trajectory of $x$
towards the trajectory of $z$.

To elaborate this further, let us denote the neighborhood radius of $\Phi$ by~$r$.
Consider an arbitrary configuration $x$ and think of it as a perturbation of $z$
with errors occurring at sites in $\diff(z,x)$.
Let $I\subseteq\ZZ$ be an interval of length $l$
such that $x$ agrees with $z$ on a margin of width $2rml$ around $I$,
that is, $x_j=z_j$ for $j\in\ZZ\setminus I$ within distance $2rml$ from $I$.
We call such an interval an \emph{isolated island} (of errors) on $x$.
Let $y$ be a configuration obtained from $x$
by \emph{erasing} the errors on $I$, that is, by replacing $x_i$ with $z_i$
for each $i\in I$.
(Note on terminology:
we shall use ``erasure'' to refer to this abstract construction of one configuration
from another, and reserve the word ``washing'' for what the cellular automaton does.)
Observe that within $ml$ steps, the distinction between $x$ and $y$ disappears
and we have $\Phi^{ml}x=\Phi^{ml}y$ (see Figure~\ref{fig:washing:isolated}).%
\begin{figure}
	\begin{center}
		\begin{tikzpicture}[>=stealth',shorten >=0.5pt,shorten <=0.5pt]
			\useasboundingbox (-5,-1.5) rectangle (5,1);
			\fill[fill=gray!10] (-4,0) -- (-2.5,-1.5) -- (2.5,-1.5) -- (4,0) -- cycle;
			\draw[help lines] (-4,0) -- (-2.5,-1.5) -- (2.5,-1.5) -- (4,0) -- cycle;
			\fill[fill=gray!60] (-1,0) -- (1,0) -- (2.5,-1.5) -- (-2.5,-1.5) -- cycle;
			\draw[help lines] (-5,0) -- (5,0);
			\draw[ultra thick, line cap=round] (-1,0) -- (1,0);
			\draw[<->,very thin] (-4.3,0) -- (-4.3,-1.5) node[midway,left] {$ml$};
			\draw[<->,very thin] (-1,0.2) -- (1,0.2) node[midway,above] {$l$};
			\draw[<->,very thin] (-4,0.2) -- (-2.5,0.2) node[midway,above] {$rml$};
			\draw[<->,very thin] (-2.5,0.2) -- (-1,0.2) node[midway,above] {$rml$};
			\draw[<->,very thin] (1,0.2) -- (2.5,0.2) node[midway,above] {$rml$};
			\draw[<->,very thin] (2.5,0.2) -- (4,0.2) node[midway,above] {$rml$};
		\end{tikzpicture}
	\end{center}
	\caption{
		Forgetting an isolated region of errors.
	}
	\label{fig:washing:isolated}
\end{figure}
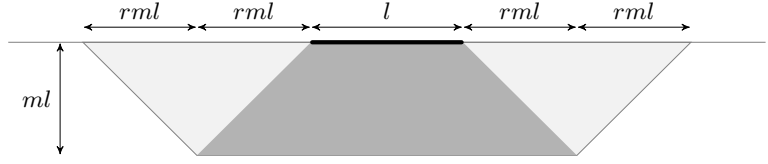
Namely, the island $I$ is washed out before time $ml$
and the sites in $\diff(z,x)\setminus I=\diff(z,y)\setminus I$ do not get a chance
to feel the distinction between $x$ and $y$.

We find that erasing an isolated island of length at most $l$
from $x$ does not affect whether the trajectory of $x$ is attracted towards
the trajectory of $z$ or not.
Neither does erasing several (possibly infinitely many) isolated islands of length $\leq l$
at the same time.
On the other hand, erasing some isolated islands from $x$
makes the error set $\diff(z,x)$ sparser and possibly turns
larger portions of $\diff(z,x)$ into isolated islands
(see Figure~\ref{fig:washing:sparse}).%
\begin{figure}
	\begin{center}
		\begin{tikzpicture} %[>=stealth',shorten >=0.5pt,shorten <=0.5pt]
			\clip (-6,-2) rectangle (6,0.5);
			%\useasboundingbox (-5,-1.5) rectangle (5,0.5);
			
			\fill[fill=gray!60] (-4,0) -- (-5.5,-1.5) -- (-0.5,-1.5) -- (-2,0) -- cycle;
			\fill[fill=gray!60] (1.5,0) -- (-0.375,-1.875) -- (6.375,-1.875) -- (4.5,0) -- cycle;
			\fill[fill=gray!60] (-0.25,0) -- (-0.625,-0.375) -- (0.625,-0.375) -- (0.25,0) -- cycle;
			\fill[fill=gray!60] (-5.8,0) -- (-6.25,-0.45) -- (-4.75,-0.45) -- (-5.2,0) -- cycle;
			
			\draw[help lines] (-6,0) -- (6,0);
			\draw[ultra thick, line cap=round] (-4,0) -- (-2,0);
			\draw[ultra thick, line cap=round] (1.5,0) -- (4.5,0);
			\draw[ultra thick, line cap=round] (-0.25,0) -- (0.25,0);
			\draw[ultra thick, line cap=round] (-5.8,0) -- (-5.2,0);
		\end{tikzpicture}
	\end{center}
	\caption{
		Washing out a sparse set of errors.
	}
	\label{fig:washing:sparse}
\end{figure}
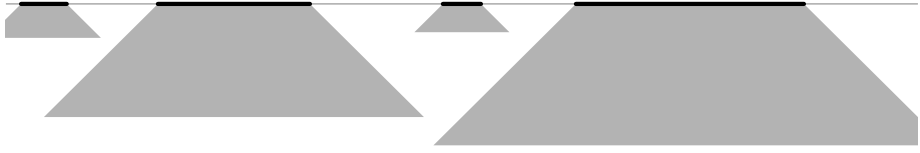
Hence, we can perform the erasure procedure recursively,
by first erasing the isolated islands of length $1$,
then erasing the isolated islands of length $2$,
then erasing the isolated islands of length $3$ and so forth.
In this fashion, we obtain a sequence
$x^{(0)}, x^{(1)}, x^{(2)}, \ldots$
with $x^{(0)}=x$ and $\diff(z,x^{(l)})\supseteq \diff(z,x^{(l+1)})$
obtained by successive erasure of isolated islands.
We say that the error set $\diff(z,x)$ is \emph{sparse}
if all errors are eventually erased, that is,
if $\bigcap_l \diff(z,x^{(l)})=\varnothing$.

However, this notion of sparseness still does not guarantee
the attraction of the trajectory of $x$ towards the trajectory of $z$.
(The trajectory of $x$ is considered to be \emph{attracted} towards the trajectory of $z$
if for each site $i$, there is a time $t_i$ such that
$(\Phi^t x)_i=(\Phi^t z)_i$ for all time steps $t\geq t_i$.
If $\Phi z=z$, this attraction becomes equivalent to the convergence $\Phi^t x\to z$.)
Note that it is well possible that all errors are eventually washed out
from $x$ (hence, their information is lost) but the washing out procedure
for larger and larger islands affects a given site $i$ indefinitely,
so that $(\Phi^t x)_i\neq(\Phi^t z)_i$ for infinite many time steps $t$
(see Figure~\ref{fig:washing:non-attracting}).%
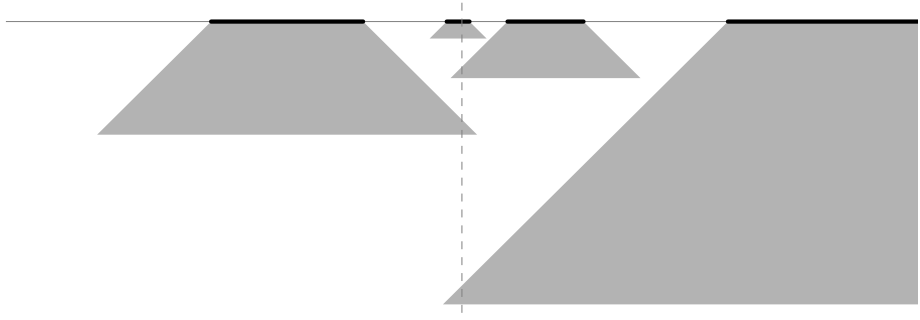
\begin{figure}
	\begin{center}
		\begin{tikzpicture} %[>=stealth',shorten >=0.5pt,shorten <=0.5pt]
			\clip (-6,-4.25) rectangle (6,0.5);
			%\useasboundingbox (-5,-1.5) rectangle (5,0.5);
			
			\fill[fill=gray!60] (-0.2,0) -- (-0.425,-0.225) -- (0.325,-0.225) -- (0.1,0) -- cycle;
			\fill[fill=gray!60] (0.6,0) -- (-0.15,-0.75) -- (2.35,-0.75) -- (1.6,0) -- cycle;
			\fill[fill=gray!60] (-3.3,0) -- (-4.8,-1.5) -- (0.2,-1.5) -- (-1.3,0) -- cycle;
			\fill[fill=gray!60] (3.5,0) -- (-0.25,-3.75) -- (12.25,-3.75) -- (8.5,0) -- cycle;
			
			\draw[help lines] (-6,0) -- (6,0);
			\draw[ultra thick, line cap=round] (-0.2,0) -- (0.1,0);
			\draw[ultra thick, line cap=round] (0.6,0) -- (1.6,0);
			\draw[ultra thick, line cap=round] (-3.3,0) -- (-1.3,0);
			\draw[ultra thick, line cap=round] (3.5,0) -- (8.5,0);
			
			\draw[dashed, help lines] (0,0.25) -- (0,-4);
		\end{tikzpicture}
	\end{center}
	\caption{
		Washing out but not attracting.
	}
	\label{fig:washing:non-attracting}
\end{figure}

To clarify this possibility,
note that an isolated island of length $l$ can affect
the state of sites within distance $rml$ up to time $ml$
(see Figure~\ref{fig:washing:isolated}).
Let us denote by $A_l\isdef\diff(z,x^{(l)})\setminus\diff(z,x^{(l-1)})$
the union of isolated islands of length $l$ that are erased from $x^{(l-1)}$
during the $l$'th stage of the erasure procedure.
The only possibility for a site $i$ to have a value other than $(\Phi^t z)_i$
at time $t$ is that site $i$ is within distance $rml$ from
$A_l$ for some $l$ satisfying $ml>t$.
In this case, we say that $i$ is 
within the \emph{territory} of such $A_l$.
A sufficient condition for the attraction of the trajectory of $x$
towards the trajectory of $z$ is that
the error set $\diff(z,x)$ is sparse, and furthermore, each site $i$ is %affected by
within the territory of
$A_l$ for at most finitely many values of $l$.
If this condition is satisfied, we say that the error set $\diff(z,x)$ is \emph{strongly sparse}.
In summary, the trajectory of $x$ is attracted towards the trajectory of $z$
if $\diff(z,x)$ is \emph{strongly sparse}.

\section{Sparseness}
The notion of (strong) sparseness described in the previous section
can be formulated and studied without reference to cellular automata,
and that is what we are going to do now.
This notion is of independent interest,
as it commonly arises in error correcting scenarios.
More sophisticated applications appear in~\cite{Gac86,Gac01} and~\cite{DurRomShe12}.
Our exposition is close to that of~\cite{DurRomShe12}.

We refer to a finite interval $I\subseteq\ZZ$ as an \emph{island}.
Let $k$ be a fixed positive integer.
The \emph{territory} (or the \emph{interaction range}) of an island $I$ of length $l$
is the set of sites $i\in\ZZ$ that are within distance $kl$ from $I$.
We denote the territory of $I$ by $R(I)$.
Two disjoint islands $I$ and $I'$ of lengths $l$ and $l'$, where $l\leq l'$,
are considered \emph{well separated} if $I'\cap R(I)=\varnothing$,
that is, 
if the larger island does not intrude the territory of the smaller one.
A set $E\subseteq\ZZ$ is said to be \emph{sparse}
if it can be covered by a family $\family{I}$
of (disjoint) pairwise well-separated islands.
A sparse set is \emph{strongly sparse} if the cover $\family{I}$
can be chosen so that each site $i$ is in the territory of
at most finitely many elements of $\family{I}$.

Note that for $k\isdef 2rm$, 
we get essentially the same notion of sparseness as in the previous section.
Indeed, let $\family{I}_l$ be the sub-family of $\family{I}$
containing all islands of length at most $l$,
and denote by $E_l\isdef E\setminus\bigcup_{I\in\family{I}_l}I$
the subset of $E$ obtained by erasing the islands of length at most~$l$.
Then, every island $I\in\family{I}$ having length~$l$
is \emph{isolated} in $E_{l-1}$, because its territory is not intruded by $E_{l-1}\setminus I$.
The new notion of strong sparseness might be slightly more restrictive,
as we define the territory by the constant $k=2rm$ rather than $k/2=rm$,
but the arguments below are not sensitive to this distinction.

The most basic observation about sparseness is
its monotonicity.
\begin{proposition}[Monotonicity]
	Any subset of a (strongly) sparse set is (strongly) sparse.
\end{proposition}

One expects a ``small'' set to be sparse.
The following theorem due to Levin~\cite{Lev00} is an indication of
this intuition.
\begin{theorem}[Sparseness of small sets~\cite{Lev00}]
	There are constants $\varepsilon,c\in(0,1)$ depending on the sparseness parameter $k$
	such that every periodic set $E\subseteq\ZZ$ with period $n$
	and at most $c\, n^\varepsilon$ elements per period is strongly sparse.
\end{theorem}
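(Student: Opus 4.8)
The plan is to build the well-separated cover hierarchically, grouping the elements of a period into islands of geometrically increasing length scales, and to control at each scale the probability (or rather, the combinatorial possibility) that a grouping fails to separate. I would fix a length scale $L_0$ and, more generally, a sequence of scales $L_j \isdef L_0 b^j$ for some base $b>1$ to be tuned against $k$. At scale $j$, I declare a maximal island to be any inclusion-maximal run of elements of $E$ in which consecutive elements are within distance $L_j$ of each other; each such island then has the property that the nearest other element of $E$ lies at distance at least $L_j$. The key quantitative point is that an island of length $l$ needs its larger neighbours to stay outside a margin of width $kl$, so I must choose the scales so that an island formed at scale $j$ has length comfortably below $L_j/k$, while being separated from everything else by at least $L_j$.

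First I would estimate how many points can cluster into a single scale-$j$ island. Since two points in the same cluster are within $L_j$, and since there are at most $c\,n^\varepsilon$ points per period, a counting argument bounds the total length a cluster can span. The crucial recursive step is to show that if one coarsens from scale $L_{j-1}$ to $L_j$, then the clusters can only merge when two scale-$(j-1)$ clusters lie within $L_j$ of each other; because the points are sparse, such merges are rare, and I would show that the typical cluster length at scale $j$ grows only polynomially in $L_j$ with a small exponent, while the separation grows linearly in $L_j$. Matching the exponent $\varepsilon$ against the geometry here is where $c$ and $\varepsilon$ must be chosen small enough relative to $k$: one wants the number of elements trapped in a scale-$j$ island to be at most (roughly) $L_j^{\varepsilon'}$ for some $\varepsilon'$ with $k L_j^{\varepsilon'} \ll L_j$, i.e. $\varepsilon' < 1$, which forces the constraint on $\varepsilon$.

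Having produced islands that are well separated at their own scale, I would assemble the global cover $\family{I}$ as the union over all scales of the islands that first become ``closed off'' at that scale, and verify the two defining requirements of strong sparseness. Pairwise well-separatedness follows because a smaller island, closed at scale $i$, has an empty margin of width $L_i$ on each side, and a larger island closed at scale $j\geq i$ either lies entirely outside that margin or would have been absorbed at scale $j$ — so the larger island never intrudes the territory $R(I)$ of the smaller, provided $k l \leq L_i$, which is exactly the length-versus-scale inequality arranged above. For the strong part, I must show each fixed site $i$ lies in the territory of only finitely many islands; here I would use that the territories $R(I)$ for islands closed at scale $j$ have width $O(L_j)$ but are separated by gaps of order $L_j$, so site $i$ meets at most $\bigo(1)$ territories per scale, and since $E$ is periodic with finitely many points per period, only finitely many scales contribute a nonempty island within reach of $i$ — in fact all sufficiently large scales collapse $E$ into a single island per period, whose territory can be arranged to avoid $i$ or to be counted once.

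The main obstacle I anticipate is the recursive length estimate: controlling how island lengths propagate across scales so that length stays below $L_j/k$ at every scale simultaneously. A naive bound lets merges at each scale multiply the island length, and if that growth outpaces the margin requirement $kl \leq L_j$, the separation condition fails at some scale and the construction collapses. The delicate accounting is therefore to prove that the total element-count inside any scale-$j$ island is bounded by a \emph{sublinear} power of $L_j$, using the global budget of $c\,n^\varepsilon$ elements per period together with the fact that a cluster of $t$ elements at scale $j$ must pack those $t$ elements into a window of length at most $t L_j$; balancing these two bounds is what pins down the admissible $\varepsilon$ and $c$, and getting the constants to close the induction is the crux of the argument.
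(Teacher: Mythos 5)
There is a genuine gap, and it sits exactly where you yourself locate the crux --- but the problem is not just ``closing constants'': the closure criterion you propose cannot be made to work in the form you state it. You require an island closed at scale $j$ to have length below $L_j/k$, and you hope to obtain this by bounding the number of elements of a scale-$j$ cluster by $L_j^{\varepsilon'}$ with $\varepsilon'<1$. This conflates element count with length. By your own packing bound, a cluster of $t$ elements whose consecutive gaps are $\leq L_j$ can span a window of length up to $t\,L_j$; in particular, any cluster that is actually \emph{formed} at scale $j$ (created by a merge across a gap of order $L_j$) already has length $>L_{j-1}$, and two points separated by a gap $\approx L_j$ give a cluster of length exceeding $L_j$ itself. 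For such a cluster the margin demanded by well-separation is $k\cdot(\text{length})>L_j$, while the only separation your clustering guarantees is $>L_j$; an adversary satisfying the count hypothesis can place a larger cluster at distance $L_j+2$, and pairwise well-separation fails. So no count bound of the form $t\leq L_j^{\varepsilon'}$ can deliver the inequality $k\,l\leq L_i$ to which you (correctly) reduce the separation step. Nor can you escape by waiting for larger scales: once the scale exceeds the largest gap of $E$ (which is at most $n$), the clusters of a periodic set become \emph{infinite}, not ``a single island per period'' --- islands are finite by definition --- so closure must happen before that scale, and proving that it does is precisely the estimate you defer.

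For comparison: the paper itself does not prove this theorem (it cites Levin), but a complete proof is a counting adaptation of the paper's proof of Theorem~\ref{thm:sparseness:Bernoulli}, and it repairs your plan with two ideas. First, well-separation is \emph{asymmetric}: a small island only needs the larger islands to stay out of its territory. So one should not demand that a cluster be far (relative to its own length) from all of $E$ at its own scale; instead one erases recursively --- an island of length $l$ is erased when it is at distance $>kl$ from what \emph{remains} after earlier erasures --- and well-separation follows from the minimality argument in that proof. Second, coverage (every point eventually erased) is where the hypothesis of at most $c\,n^{\varepsilon}$ points per period enters, via the explanation tree: if a site survives the erasure stages with scales $l_j=(4k+3)^{j-1}$, then $E$ must contain $2^{j}$ \emph{distinct} points within distance $O(k\,l_j)$ of that site. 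Periodicity caps the number of points of $E$ in such a window by roughly $\bigl(4(k+1)l_j/n+1\bigr)c\,n^{\varepsilon}$, so choosing $j$ with $l_j\leq n<l_{j+1}$ yields
\begin{align}
	2\Bigl(\frac{n}{4k+3}\Bigr)^{\delta} \;\leq\; O(k)\,c\,n^{\varepsilon},
	\qquad \delta\isdef\frac{\log 2}{\log(4k+3)},
\end{align}
which is contradictory once $\varepsilon<\delta$ and $c$ is small (for small $n$ one chooses $c$ so that $c\,n^{\varepsilon}<1$, forcing $E=\varnothing$). This also gives the strong part at once: every point is erased by stage $l_j\leq n$, so all islands and territories have bounded size, and a fixed site meets only finitely many disjoint bounded islands. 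Your scale-hierarchy intuition is gesturing at exactly this doubling phenomenon, but without the erasure (to fix separation) and the tree counting (to fix coverage) the induction you describe does not close.
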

%\begin{proof}
%	\qed
%\end{proof}

The reverse intuition is misleading:
a sparse set does not need to be ``small''.
In fact, there are sets with arbitrarily large densities that are sparse.
The existence of such sets is demonstrated by Kari and Le~Gloannec~\cite{KarGlo12},
and in special cases, was also noted
by Levin~\cite{Lev00} and K\r{u}rka~\cite{Kur03}.
\begin{theorem}[Large sparse sets~\cite{KarGlo12}]
	There are periodic subsets of $\ZZ$ with density arbitrarily close to $1$
	that are strongly sparse.
\end{theorem}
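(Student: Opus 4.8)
The plan is to build such a set by a self-similar, multi-scale construction in which islands of many different lengths are nested inside one another's territories. The guiding observation is that a \emph{single} scale cannot succeed: if all islands in a cover have comparable length $l$, then, being pairwise well separated and (for equal lengths) mutually excluded from each other's territories, consecutive islands must be separated by gaps exceeding $kl$, so the density of what they cover is at most $1/(1+k)$. The asymmetry in the definition of well-separation is what rescues us — a long island must avoid the territory of a short one, whereas the short island is free to sit deep inside the territory of the long one — and this suggests filling the large forced gaps around big islands with progressively smaller islands.

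Concretely, I would fix the sparseness parameter $k$ and a rapidly increasing sequence of lengths $L_0=1<L_1<L_2<\cdots$ with $L_j\gg kL_{j-1}$, and define finite blocks $P_0,P_1,\dots,P_n$ recursively. The block $P_j$ starts with a \emph{level-$j$ island} $B_j$ of length $L_j$ (entirely in $E$), followed by a gap region $G_j$ of length slightly more than $kL_j$; the interior of $G_j$ is filled by $c_j\approx kL_j/\abs{P_{j-1}}$ consecutive copies of $P_{j-1}$, leaving only thin buffers of pure gap (each of length $\approx kL_{j-1}$) at its two ends. The period would be $P_n$, tiled over $\ZZ$, and the cover $\family{I}$ would consist of all islands of levels $0,\dots,n$ occurring in it, repeated periodically.

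Verifying well-separation is then an induction on the level. Consecutive level-$j$ islands are at distance $\abs{G_j}>kL_j$, which settles the equal-length case at the top length $L_j$; and since $\abs{P_{j-1}}>kL_{j-1}$, the level-$(j-1)$ islands of adjacent copies of $P_{j-1}$ are likewise separated. It remains to check that $B_j$ stays outside the territory of every \emph{strictly smaller} island: the buffers keep $B_j$ at distance $>kL_{j-1}\geq k\ell$ from every nearby island of length $\ell\leq L_{j-1}$, which is exactly what well-separation asks of the larger island (that the smaller ones lie inside $B_j$'s territory is permitted). Separation of $B_j$ from a \emph{larger} island is the responsibility of that larger island and is discharged at its own level through its own buffer, so the induction closes. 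Strong sparseness is then automatic, since the whole configuration is periodic and every island has length at most $L_n$: each site lies within distance $kL_n$ of, hence in the territory of, only finitely many islands.

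Finally I would track the density. Writing $d_j$ for the fraction of $E$-sites in $P_j$ and momentarily ignoring the lower-order buffers, the block $P_j$ has length $\approx(1+k)L_j$ and contains the $L_j$ sites of $B_j$ together with $c_j\approx kL_j/\abs{P_{j-1}}$ copies of $P_{j-1}$ contributing $\approx kL_j\,d_{j-1}$ further $E$-sites, whence
\begin{align}
	d_j &\approx \frac{L_j\,(1+k\,d_{j-1})}{(1+k)\,L_j}=\frac{1+k\,d_{j-1}}{1+k}.
\end{align}
Starting from $d_0=1/(1+k)$ this gives $1-d_j=\tfrac{k}{1+k}(1-d_{j-1})$, so $1-d_n=\bigl(\tfrac{k}{1+k}\bigr)^{n+1}\to 0$, and $P_n$ attains density arbitrarily close to $1$ once $n$ is large. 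I expect the main obstacle to be bookkeeping rather than ideas: one must choose the growth of the $L_j$ fast enough (say $L_j/L_{j-1}\to\infty$) that the buffers and the integer rounding in $c_j$ contribute only an $o(1)$ fraction, so that the clean recursion above governs the true density up to a negligible error. The conceptual heart — that a single scale is capped at $1/(1+k)$ while the recursion has fixed point $1$ — is precisely what makes density arbitrarily close to $1$ attainable.
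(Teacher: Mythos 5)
The paper itself gives no proof of this theorem --- it is quoted from Kari and Le~Gloannec~\cite{KarGlo12} --- so there is no internal argument to compare you against; judged on its own terms, your construction is correct, and it is essentially the hierarchical construction of the cited work. You have isolated the two load-bearing points: the asymmetry of well-separation (only the \emph{larger} island must avoid the \emph{smaller} one's territory, so small islands may legitimately sit deep inside the territory of a large one), and the fact that a level-$j$ island needs protective buffers of length only slightly more than $kL_{j-1}$, not $kL_j$. With those buffers every island of level $\geq j$ stays at distance $>kL_{j-1}\geq k\ell$ from every island of length $\ell\leq L_{j-1}$, equal-level islands are separated by the full gap $\abs{G_j}>kL_j$, and unequal-level pairs are discharged by the buffer of the larger island, which is exactly what the asymmetric definition demands; strong sparseness is indeed automatic from periodicity plus the uniform bound $L_n$ on island lengths. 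The density recursion $1-d_j=\tfrac{k}{1+k}(1-d_{j-1})$, with fixed point $1$, is right, and the bookkeeping you defer is genuinely routine --- in fact it is cleanest in the opposite order from what you suggest: fix the target $\eta$, choose $n$ with $(k/(1+k))^{n+1}<\eta/2$ \emph{first}, and only then choose $L_1\ll\cdots\ll L_n$ so that buffers and rounding cost less than $\eta/2$ in total, so no uniformity in $n$ is ever needed. One slip of notation, not damaging: the separation of the level-$(j-1)$ islands of adjacent copies of $P_{j-1}$ is guaranteed by $\abs{G_{j-1}}>kL_{j-1}$ (the gap), not by $\abs{P_{j-1}}>kL_{j-1}$ (the whole block, which includes the island itself).
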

It immediately follows that the set of possible densities of
strongly sparse (periodic) subsets of $\ZZ$ is dense in $[0,1]$.
A more important corollary is a strengthening of the impossibility result of
Land and Belew~\cite{LanBel95} for cellular automata with \emph{linear-time} eroder property:
for any such automaton, there are configurations $x$ with density $\rho(x)$
close to any number in $[0,1]$ that are incorrectly classified.

The main result of interest for us is the sparseness of
sufficiently biased Bernoulli random sets.
\begin{theorem}[Sparseness of Bernoulli sets~\cite{Gac86,Gac01,DurRomShe12}]
\label{thm:sparseness:Bernoulli}
	A Bernoulli random set $E\subseteq\ZZ$ with parameter $p$
	is almost surely strongly sparse
	as long as $p<(2k)^{-2}$, %$p<\left(2k(4k+3)\right)^{-2}$,
	where $k$ is the sparseness parameter.
\end{theorem}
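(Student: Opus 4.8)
The plan is to build a renormalization (or ``percolation'') hierarchy over $E$, in which the points of $E$ are recursively grouped into islands of geometrically growing length, and to show that when $p<(2k)^{-2}$ this hierarchy almost surely stabilizes at every site. First I would fix length scales $L_0\isdef 1$ and $L_{n+1}\isdef 2kL_n$, so that $L_n\le(2k)^n$, and construct a cover greedily by levels. The level-$0$ islands are the singletons $\{i\}$ with $i\in E$. Given the level-$n$ islands, I declare two of them \emph{linked} if they fail to be well separated (the larger intrudes the territory of the smaller, i.e.\ they lie within distance $k$ times the smaller length), and I merge each maximal chain of linked level-$n$ islands into the smallest interval containing it; these are the level-$(n+1)$ islands, and a short computation shows each has length at most $(k+2)L_n\le L_{n+1}$. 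An island linked to no other at its level is \emph{finalized}, and since a finalized island persists unchanged to all higher levels without ever being linked, the finalized islands are automatically pairwise well separated. The deterministic content of the construction, which I would record as a lemma, is that \emph{if} merging terminates at every site (no site lies in islands of arbitrarily high level), \emph{then} the finalized islands form a family of pairwise disjoint, pairwise well-separated islands covering $E$, and $E$ is strongly sparse precisely when, in addition, each site lies in the territory of only finitely many of them. Everything thus reduces to a probabilistic tail estimate on the level of the island containing a given site.

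Next I would set up the key recursion. Let $p_n$ denote the probability that a fixed site $i$ is the left endpoint of a level-$n$ island; clearly $p_0=\xPr\{i\in E\}=p$. To bound $p_{n+1}$, observe that a level-$(n+1)$ island with left endpoint $i$ must contain at least two linked level-$n$ islands, its leftmost one having left endpoint $i$ and some further one having left endpoint $j$ with $0<j-i\le(k+1)L_n\le 2kL_n$. These two level-$n$ islands occupy disjoint intervals, so, once the construction is arranged so that the event ``$i$ is the left endpoint of a level-$n$ island'' is measurable with respect to the coin flips strictly inside that island, the two occurrences are independent, each of probability at most $p_n$. A union bound over the $\le 2kL_n\le(2k)^{n+1}$ admissible positions of $j$ then yields
\begin{equation}
	p_{n+1}\;\le\;(2k)^{n+1}\,p_n^{\,2}.
\end{equation}
Securing this clean independent $p_n^2$ factor is the crux: the recursive definition of ``level-$n$ island'' must be phrased (for instance by a left-to-right greedy stopping rule) so that its occurrence depends only on the Bernoulli variables inside the island and not on the surrounding sites; otherwise the two sub-events could share variables and the squaring would fail. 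This bookkeeping, rather than any deep probability, is where the real work lies, and it is also what pins down the exact constant.

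Finally I would solve the recursion and invoke Borel--Cantelli. Writing $a_n\isdef-\log p_n$, the recursion gives $a_{n+1}\ge 2a_n-(n+1)\log(2k)$, and unrolling this (the linear-in-$n$ correction sums to $O(2^n)$) yields $a_n\ge 2^n\bigl(a_0-2\log(2k)\bigr)+O(n)$. Hence, exactly when $a_0=-\log p>2\log(2k)$, that is when $p<(2k)^{-2}$, one has $a_n\to\infty$ doubly exponentially, so $p_n\le\exp(-c\,2^n)$ for some $c>0$. Consequently $\sum_n(2k)^{n+1}p_n<\infty$, and since the probability that a fixed site $i$ lies in the territory of \emph{some} level-$n$ island is at most $(2k)^{n+1}p_n$ (a union bound over the admissible left endpoints within distance $(k+1)L_n$ of $i$), the Borel--Cantelli lemma shows that almost surely $i$ lies in only finitely many such territories; in particular the merging terminates at $i$. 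Taking the countable intersection over all $i\in\ZZ$ of these almost-sure events and applying the deterministic lemma of the first paragraph, we conclude that $E$ is strongly sparse with probability $1$, which is the claim.
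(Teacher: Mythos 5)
Your overall architecture---islands built hierarchically at geometrically growing scales, a doubly exponential bound on the probability that a site survives to level $n$, then Borel--Cantelli for strong sparseness---is the same as the paper's, and your recursion even reproduces the threshold $(2k)^{-2}$. But the step you yourself call the crux is a genuine gap, not bookkeeping. Any definition of ``level-$n$ island'' that supports your deterministic lemma (same-level islands pairwise disjoint, finalized islands well separated, finalized islands never merged later) must contain a maximality/isolation clause: an interval qualifies only if no comparable structure lies within linking distance of it. That clause is a \emph{negative} condition on the Bernoulli variables in a margin of width roughly $kL_{n-1}$ \emph{around} the island, so the event ``$i$ is the left endpoint of a level-$n$ island'' cannot be made measurable with respect to the flips strictly inside the island; a left-to-right stopping rule does not remove the clause, it only relocates it. And the clause is fatal to the clean $p_n^2$ factor: two linked islands may lie close enough that their margins overlap, the shared ``no points here'' conditions are decreasing events and hence positively correlated, so $\xPr(A\cap B)\le\xPr(A)\,\xPr(B)$ is simply not available. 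If you try to pay for the dependence by factoring out the shared margin, the correction is of order $(1-p)^{-kL_n}$, and since $L_n=(2k)^n$ grows much faster than $2^n$, these corrections accumulate and destroy the recursion $a_{n+1}\ge 2a_n-O(n)$ for \emph{every} fixed $p>0$. (A secondary flaw: your claim that a finalized island ``persists without ever being linked'' is false as stated---an island unlinked at level $n$ can become linked at level $n+1$ when a \emph{smaller} distant island merges into a large one whose hull now reaches within $k\abs{I}$ of $I$; this is repairable, but it shows the deterministic lemma also needs real care.)

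The paper resolves exactly this tension by decoupling the two roles of the construction. The environment-dependent hierarchy (islands of length $l$ that are isolated in $E_{l-1}$) is used only for the deterministic covering statement; for the probability estimate it is replaced by a purely \emph{monotone} necessary condition: if $u$ survives to scale $l_n$, then $E$ must contain an \emph{explanation tree} of depth $n$, i.e.\ $2^n$ points with prescribed pairwise distances, because failure to be erased forces a second surviving point at distance between $\tfrac12 l_n$ and $(k+\tfrac12)l_n$, recursively. Existence of points of $E$ at \emph{distinct} sites involves only positive conditions, hence genuinely independent coin flips, and a union bound over the at most $(2k)^{2^{n+1}}$ candidate trees gives $\xPr(u\in E_{l_n})\le \bigl(p(2k)^2\bigr)^{2^n}$---the same recursion you wanted, now justified. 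Note also that distinctness of the leaves is what forces the paper's scale ratio $l_m=(4k+3)^{m-1}$: the requirement $\tfrac12 l_m\ge 2\bigl(k+\tfrac12\bigr)(l_{m-1}+\cdots+l_1)$ fails for your ratio $2k$, so even the tree-based repair would not work at your scales. If you replace the inside-measurability claim by this relaxation to an explanation tree and enlarge the scale ratio accordingly, your argument closes and becomes essentially the paper's proof.
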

\begin{proof}
	For a set $E\subseteq\ZZ$, we recursively construct a family $\family{I}$
	of pairwise well-separated islands as a candidate for covering $E$.
	The family $\family{I}$ will be divided into sub-families $\family{J}_l$
	consisting of islands of length $l$,
	and $E_l$ will be the set 
	obtained by erasing 
	the selected islands of length at most $l$ from $E$.
	Let $E_0\isdef E$.
	For $l\geq 1$, recursively define $\family{J}_l$ as the family of islands $I\subseteq\ZZ$
	of length $l$ that intersect $E_{l-1}$ and are isolated in $E_{l-1}$
	(i.e., $E_{l-1}\setminus I$ does not intersect the territory of $I$),
	and set $E_l\isdef E_{l-1}\setminus\bigcup_{I\in\family{J}_l}I$.
	Let $\family{I}\isdef\bigcup_l\family{J}_l$.
	
%	To see that the elements of $\family{I}$ are pairwise well separated,
%	let us first argue that every island $I\in\family{J}_l$ is minimal,
%	in that, it does not include a smaller island containing $I\cap E_{l-1}$.
%	Indeed, let $J\subseteq I$ be a minimal island containing $I\cap E_{l-1}$,
%	and assume that $\abs{J}<l$.
%	By minimality, the endpoints of $J$ must be in $E_{l-1}$.
	To see that the elements of $\family{I}$ are pairwise well separated,
	let us first argue that every island $I\in\family{J}_l$ is minimal,
	in that, it is the smallest interval containing $I\cap E_{l-1}$.
	%its endpoints are in $E_{l-1}$.
	Indeed, let $J\subseteq I$ be the smallest island containing $I\cap E_{l-1}$,
	and assume that $\abs{J}<l$.
	Then, the endpoints of $J$ must be in $E_{l-1}$.
	Therefore, every island $I'\in\family{J}_{l'}$ with $l'<l$
	must have been at distance more than $kl'$ from $J$,
	for otherwise, $I'$ would not have been isolated in $E_{l'-1}$.
	In particular, for $l'$ satisfying $\abs{J}\leq l' <l$,
	the island $J$ has distance more than $k\abs{J}$ from
	every $I'\in\family{J}_{l'}$.
	Since the distance between $J$ and $E_{l-1}$ is also more than $kl\geq k\abs{J}$,
	it follows that $J$ is isolated in $E_{\abs{J}-1}$.
	On the other hand, $J$ intersects $E_{\abs{J}-1}$,
	because it intersects $E_{l-1}$ and $E_{l-1}\subseteq E_{\abs{J}-1}$.
	We find that $J\in\family{J}_{\abs{J}-1}$,
	which is a contradiction.
	Therefore, $I$ is minimal.
	The well-separation of two islands $I\in\family{J}_l$ and $I'\in\family{J}_{l'}$
	with $l\leq l'$ follows from the minimality of $I'$.
	We conclude that the elements of $\family{I}$ are also well separated.
	
	Now, let $E$ be a Bernoulli random configuration with parameter $p$.
	We choose an appropriate sequence $0<l_1<l_2<l_3<\cdots$
	(to be specified more explicitly below)
	and observe whether a site $u$ is in $E_{l_n}$.
	We will show that the probability that site $u$ is in $E_{l_n}$
	is double exponentially small, that is,
	$\xPr(u\in E_{l_n})\leq\alpha^{2^n}$ for some $\alpha<1$.
	
	Let $u$ be an arbitrary site.
	In order for $u$ to be in $E_{l_n}$,
	it is necessary that $u$ is also in $E_{l_n-1}$,
	and furthermore, $u$ is not covered by any island in $\family{J}_{l_n}$.
	Therefore, $E_{l_{n-1}}$ (which includes $E_{l_n-1}$) must contain two elements
	$u_{\symb{0}}\isdef u$ and $u_{\symb{1}}$ that
	are farther than $l_n/2$ from each other
	but no farther than $(k+1/2)l_n$ from each other
	(see Figure~\ref{fig:explanation-tree:children}).
    \begin{figure}
    	\begin{center}
    		\begin{tikzpicture}[>=stealth',shorten >=0.5pt,shorten <=0.5pt]
    			%\useasboundingbox (-4,-0.5) rectangle (4,0.5);
    			
    			\draw[help lines] (-4,0) -- (4,0);
    			%\draw[ultra thick, line cap=round] (-1,0) -- (1,0);
    			
    			\fill (0,0) circle (2pt) node[below=1ex] {$u_{\symb{0}}=u$};
    			\fill (3,0) circle (2pt) node[below=1ex] {$u_{\symb{1}}$};

    			\foreach \x in {-4, -1, 1, 4}
    				\draw[help lines] (\x,0.1) -- (\x,-0.1);
    			
    			\draw[->,very thin] (0,0.3) -- (1,0.3) node[midway,above] {$l_n/2$};
    			\draw[->,very thin] (0,0.3) -- (-1,0.3) node[midway,above] {$l_n/2$};
    			\draw[<->,very thin] (-4,0.3) -- (-1,0.3) node[midway,above] {$kl_n$};
    			\draw[<->,very thin] (1,0.3) -- (4,0.3) node[midway,above] {$kl_n$};
    			
    		\end{tikzpicture}
    	\end{center}
    	\caption{
    		An evidence for $u\in E_{l_n}$ in $E_{l_{n-1}}$
    		(see the proof of Theorem~\ref{thm:sparseness:Bernoulli}).
    	}
    	\label{fig:explanation-tree:children}
    \end{figure}
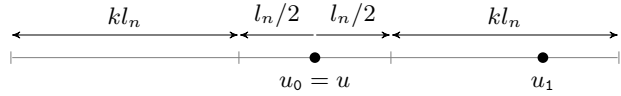
	In a similar fashion, in order for $u_{\symb{0}}$ and $u_{\symb{1}}$
	to be in $E_{l_{n-1}}$,
	the set $E_{l_{n-2}}$ must contain
	elements $u_{\symb{0}\symb{0}}\isdef u_{\symb{0}}$,
	$u_{\symb{0}\symb{1}}$, $u_{\symb{1}\symb{0}}\isdef u_{\symb{1}}$ and $u_{\symb{1}\symb{1}}$
	such that
	\begin{align}
		\frac{1}{2}l_{n-1} &< d(u_{\symb{0}\symb{0}},u_{\symb{0}\symb{1}})
			\leq \Big(k+\frac{1}{2}\Big)l_{n-1} \;,\\
		\frac{1}{2}l_{n-1} &< d(u_{\symb{1}\symb{0}},u_{\symb{1}\symb{1}})
			\leq \Big(k+\frac{1}{2}\Big)l_{n-1} \;.
	\end{align}
	Repeating this procedure, we find a binary tree of depth $n$
	with roots in $E_0=E$ that provides an evidence
	for the presence of $u$ in $E_{l_n}$.
	We call such a tree an \emph{explanation tree}.
	Thus, in order to have $u\in E_{l_n}$,
	there must be at least one explanation tree for it.
	
	We estimate the probability of the existence of an explanation tree
	for $u\in E_{l_n}$.
	Let $T=(u,u_{\symb{0}},u_{\symb{1}}, u_{\symb{0}\symb{0}}, u_{\symb{0}\symb{1}}, \ldots,
	u_{\symb{1}\symb{1}\cdots\symb{0}},u_{\symb{1}\symb{1}\cdots\symb{1}})$
	be a \emph{candidate} explanation tree,
	that is, a tree with the right distances between the nodes.
	To simplify the estimation, we choose the
	lengths $l_1,l_2,\ldots$ in such a way to make sure that
	the leaves of $T$ are distinct elements of $\ZZ$.
	A sufficient condition for the distinctness of the leaves of $T$
	is that for each $m$,
	\begin{align}
		\frac{1}{2}l_m &\geq
			2\Big(k+\frac{1}{2}\Big)(l_{m-1}+l_{m-2}+\cdots+l_1) \;.
	\end{align}
	This would guarantee that the two subtrees
	descending from each node do not intersect.
	We choose $l_m\isdef (4k+3)^{m-1}$, which is a solution of the above system of inequalities.
	
	A candidate tree $T$ is an explanation tree for $u\in E_{l_n}$
	if and only if all its leaves are in $E$.
	Whether or not each leaf $u_w$ of $T$ is in $E$
	is determined by a biased coin flip with probability $p$ of falling in $E$.
	With the above choice of $l_m$,
	the events $u_w\in E$ for different leaves of $T$ are independent.
	It follows that $T$ is an explanation tree for $u\in E_{l_n}$
	with probability $p^{2^n}$.
	
	Let us now estimate the number of candidate trees of depth $m$.
	Denote this number by $f_m$.
	Observe that $f_m$ satisfies the recursive inequality
	\begin{align}
		f_m &\leq 2k l_m\, f_{m-1}^2
	\end{align}
	with $f_0\isdef 1$.
	Indeed, $2k l_m$ counts for the number of possible positions for
	$u_{\symb{1}}$ and $f_{m-1}^2$ counts the number of
	possibilities for each of the two subtrees.
	Letting $g_m\isdef\log f_m$, we have
	\begin{align}
		g_m &\leq a\,m + b + 2g_{m-1} \;,
	\end{align}
	where $a\isdef\log(4k+3)$ and $b\isdef\log 2k-\log(4k+3)$. %$b\isdef\log 2k$.
	Expanding the last recursion we get
	\begin{align}
		g_m &\leq
%			a\,m + b + 2g_{m-1} \\
%		&\leq
%			a\,m + b + 2a(m-1) + 2b + 4g_{m-2} \\
%		&\leq
%			\cdots \\
%		&\leq
%			a(m + 2(m-1) + 2^2(m-2) + \cdots + 2^{m-1}) + b(1+2+\cdots+2^m) \\
%		&\leq
			2^m(2b + a\sum_{i=0}^m\frac{i}{2^i}) \\
		&\leq
			2^m(2b + a\sum_{i=0}^\infty\frac{i}{2^i}) \\
		&=
			2^{m+1}(a+b) \;.
	\end{align}
	Therefore,
	\begin{align}
		f_m &\leq
			%\left(2k(4k+3)\right)^{2^{m+1}} \;.
			(2k)^{2^{m+1}} \;.
	\end{align}
	
	By the sub-additivity of the probabilities,
	we find that the probability of the existence of at least one explanation tree
	for $u\in E_{l_n}$ satisfies
	\begin{align}
		\xPr(u\in E_{l_n}) &\leq
			p^{2^n} f_n \leq \alpha^{2^n} \;,
	\end{align}
	where %$\alpha\isdef p\left(2k(4k+3)\right)^2$.
	$\alpha\isdef p(2k)^2$.
	Since %$p<\left(2k(4k+3)\right)^{-2}$
	$p<(2k)^{-2}$, we get $\alpha<1$.
	
	The probability that a given site $u\in\ZZ$
	is in $E$ but is not covered by $\family{I}$
	(i.e., never erased)
	is
	\begin{align}
		\xPr(u\in \bigcap_l E_l) &=
			\xPr(u\in \bigcap_n E_{l_n}) =
			\lim_{n\to\infty} \xPr(u\in E_{l_n}) =
			\lim_{n\to\infty} \alpha^{2^n} = 0 \;.
	\end{align}
	Since $\ZZ$ is countable, we find, by sub-additivity,
	that $\xPr(\bigcap_l E_l\neq\varnothing)=0$,
	which means, $E$ is sparse with probability~$1$.
	
	That $E$ is strongly sparse with probability~$1$ follows by
	the Borel-Cantelli argument.
	Namely, the event that a site $u$ is in the territory of
	infinitely many islands $I\in\family{I}$
	can be expressed as
	%$\bigcap_m\bigcup_{n\geq m}\{\text{$u$ is within distance $kl_n$ from $E_{l_n}$}\}$.
	$\bigcap_m\bigcup_{n\geq m}\{d(u,E_{l_n})\leq kl_n\}$.
	(Note that an island covering a site in $E_{l_n}$
	has length greater than $l_n$.)
	The probability that $u$ is within distance $kl_n$ from $E_{l_n}$
	satisfies
	\begin{align}
		\xPr\big(d(u,E_{l_n})\leq kl_n\big) &\leq
			(2k l_n+1)\alpha^{2^n} = (2k(4k+3)^{n-1}+1)\alpha^{2^n} \;.
	\end{align}
	Therefore,
	\begin{align}
		\xPr\Big(\bigcup_{n\geq m}\{d(u,E_{l_n})\leq kl_n\}\Big) &\leq
%			\sum_{n\geq m} \xPr\big(d(u,E_{l_n})\leq kl_n\big)
%		&\leq
%			\sum_{n\geq m} (2k l_n+1)\alpha^{2^n} \\
%		&=
			\sum_{n\geq m} (2k(4k+3)^{n-1}+1)\alpha^{2^n}
		<
			\infty \;.
	\end{align}
	It follows that
%	the probability that $u$
%	in the territory of infinitely many islands $I\in\family{I}$ is.
	\begin{align}
		\xPr\Big(\bigcap_m\bigcup_{n\geq m}\{d(u,E_{l_n})\leq kl_n\}\Big) &\leq
			\lim_{m\to\infty} \sum_{n\geq m} (2k(4k+3)^{n-1}+1)\alpha^{2^n} = 0 \;.
%		&=
%			0 \;.
	\end{align}
	Using again the countability of $\ZZ$,
	we find that, with probability~$1$,
	no site $u$ is in the territory of more than finitely may islands $I\in\family{I}$.
	That is, $E$ is almost surely strongly sparse.
	\qed
\end{proof}

Theorem~\ref{thm:sparseness:Bernoulli},
along with a standard application of monotonicity,
shows that when the Bernoulli parameter is varied,
a non-trivial phase transition occurs.
\begin{corollary}[Phase transition]
\label{cor:sparseness:Bernoulli:phase-transition}
	There is a critical value $p_\critical\in(0,1]$
	depending on the sparseness parameter $k$ such that
	a Bernoulli random set $E\subseteq\ZZ$ with parameter $p$
	is almost surely strongly sparse if $p<p_\critical$
	and is almost surely not strongly sparse if $p>p_\critical$.
\end{corollary}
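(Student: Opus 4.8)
The plan is to realise the critical value as the threshold of the monotone, $\{0,1\}$-valued function
$p\mapsto P(p)\isdef\xPr_p\{E\text{ is strongly sparse}\}$,
where $\xPr_p$ denotes the law of a Bernoulli random set $E\subseteq\ZZ$ with parameter $p$. Concretely, I would first show that $P$ is non-increasing in $p$, then invoke a zero-one law to conclude that $P$ takes only the values $0$ and $1$, and finally set $p_\critical\isdef\sup\{p\in[0,1]:P(p)=1\}$ and read off the two regimes from these facts together with Theorem~\ref{thm:sparseness:Bernoulli}.

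For the monotonicity the natural tool is a monotone coupling. Let $(U_i)_{i\in\ZZ}$ be independent random variables distributed uniformly on $[0,1]$, and for each parameter $q$ put $E_q\isdef\{i\in\ZZ:U_i<q\}$, so that $E_q$ is Bernoulli with parameter $q$ and $E_p\subseteq E_{p'}$ whenever $p\leq p'$. On this common probability space the Monotonicity proposition gives that whenever $E_{p'}$ is strongly sparse, so is its subset $E_p$; hence $\{E_{p'}\text{ strongly sparse}\}\subseteq\{E_p\text{ strongly sparse}\}$, and therefore $P(p)\geq P(p')$ for $p\leq p'$.

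Next I would establish that $P(p)\in\{0,1\}$. The event that $E$ is strongly sparse is invariant under translations of $\ZZ$: a translate of a strongly sparse set is again strongly sparse, since one may simply translate the covering family $\family{I}$. Because the Bernoulli measure on $\{\symb{0},\symb{1}\}^\ZZ$ is ergodic (indeed mixing) with respect to the shift, every shift-invariant measurable event has probability $0$ or $1$, which forces $P(p)\in\{0,1\}$.

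It then remains to assemble the pieces. By Theorem~\ref{thm:sparseness:Bernoulli} we have $P(p)=1$ for every $p<(2k)^{-2}$, so the set $S\isdef\{p:P(p)=1\}$ is non-empty and $p_\critical\geq(2k)^{-2}>0$, while trivially $p_\critical\leq 1$. Since $P$ is non-increasing, $S$ is downward closed; hence for $p<p_\critical=\sup S$ there is some $q\in(p,p_\critical]$ with $P(q)=1$, so $P(p)\geq P(q)=1$ and $E$ is almost surely strongly sparse, whereas for $p>p_\critical$ we have $p\notin S$, so $P(p)=0$ by the zero-one law and $E$ is almost surely not strongly sparse. The step needing the most care is the zero-one law, and inside it the measurability of the strong-sparseness event: I would verify this by reading strong sparseness off the explicit recursive construction of $\family{I}$ used in the proof of Theorem~\ref{thm:sparseness:Bernoulli}, noting that membership $u\in E_l$ is determined by the restriction of $E$ to a bounded window around $u$, so that each such event is a cylinder event and strong sparseness becomes a countable combination of cylinder events, hence Borel.
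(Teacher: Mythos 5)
Your proposal is correct and follows essentially the same route as the paper's own proof: the monotone coupling via uniform variables $U_i$ together with the Monotonicity proposition, the translation-invariance/ergodicity zero-one law, the definition $p_\critical\isdef\sup\{p: P(p)=1\}$, and Theorem~\ref{thm:sparseness:Bernoulli} to get $p_\critical>0$. Your added attention to the measurability of the strong-sparseness event (via the canonical recursive cover) is a point the paper silently glosses over, but it does not change the argument.
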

\begin{proof}
	First, observe that the (strong) sparseness of $E$ is a translation-invariant event
	(i.e., for $a\in\ZZ$, the sparseness of $a+E$ is equivalent to the sparseness of $E$).
	Therefore, by ergodicity, the probability that a Bernoulli random set
	is (strongly) sparse is either~$0$ or~$1$.
	
	The presence of a threshold value $p_\critical\in[0,1]$ (possibly $0$)
	is a standard consequence of monotonicity.
	Indeed, let $U_i, i\in\ZZ$ be a collection of independent random variables
	with uniform distribution on the real interval $[0,1]$.
	For $p\in [0,1]$, define a set $E^{(p)}\isdef\{i\in\ZZ: U_i<p\}$.
	Then, $E^{(p)}$ is a Bernoulli random set with parameter $p$,
	and the collection of sets $E^{(p)}$ is increasing in $p$.
	Let $p_\critical\isdef\sup\{p: \text{$E^{(p)}$ is almost surely (strongly) sparse}\}$.
	By monotonicity, the set $E^{(p)}$ is almost surely (strongly) sparse for $p<p_\critical$
	and is almost surely not (strongly) sparse for $p>p_\critical$.
	
	Finally, we know from Theorem~\ref{thm:sparseness:Bernoulli} that $p_\critical>0$.
	\qed
\end{proof}

\section{Restricted Classification}
Let us state the claimed result of this paper explicitly
as a corollary of Theorem~\ref{thm:sparseness:Bernoulli}
%(and Corollary~\ref{cor:sparseness:Bernoulli:phase-transition})
and the discussions in the previous sections.
\begin{corollary}[Restricted classification]
\label{cor:classification:restricted}
	Let $\Phi:\{\symb{0}, \symb{1}\}^\ZZ\to\{\symb{0}, \symb{1}\}^\ZZ$
	be a cellular automaton that washes out finite islands of errors
	on either of the two uniform configurations $\unif{\symb{0}}$ and $\unif{\symb{1}}$
	in linear time.
	Namely, suppose that there is a constant $m$ such that
	for every finite perturbation $x$ of $\unif{\symb{0}}$ %(respectively, $\unif{\symb{1}}$)
	for which $\diff(\unif{\symb{0}},x)$ has diameter at most $l$,
	we have $\Phi^{ml}x=\unif{\symb{0}}$,
	and similarly for $\unif{\symb{1}}$.
	Then, there is a constant $p_\critical\in(0,1/2]$
	such that $\Phi$ classifies a Bernoulli random configuration
	with parameter %$p\in[0,1]\setminus[p_\critical,1-p_\critical]$
	$p\in [0,p_\critical)\cup(1-p_\critical,1]$
	almost surely correctly.
\end{corollary}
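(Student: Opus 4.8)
The plan is to reduce the classification statement to the strong sparseness of the error set and then invoke Theorem~\ref{thm:sparseness:Bernoulli}. Let $r$ denote the neighborhood radius of $\Phi$ and set $k\isdef 2rm$, the sparseness parameter attached to the linear-time eroder property in Section~3. I would treat the two regimes $p$ near $0$ and $p$ near $1$ separately, exploiting the $\symb{0}$--$\symb{1}$ symmetry, and spell out only the first.

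First I would fix a Bernoulli random configuration $X$ with parameter $p$ and note that its error set relative to $\unif{\symb{0}}$, namely $\diff(\unif{\symb{0}},X)=\{i\in\ZZ: X_i=\symb{1}\}$, is precisely a Bernoulli random set $E$ with parameter $p$. By Theorem~\ref{thm:sparseness:Bernoulli}, whenever $p<(2k)^{-2}$ this $E$ is almost surely strongly sparse (in the sense defined with the parameter $k=2rm$, which is at least as restrictive as the notion used in Section~3). The discussion of Section~3 then applies: because $\diff(\unif{\symb{0}},X)$ is strongly sparse and $\unif{\symb{0}}$ is a fixed point around which $\Phi$ washes out finite islands in linear time, the trajectory of $X$ is attracted to the constant trajectory of $\unif{\symb{0}}$, i.e.\ $\Phi^t X\to\unif{\symb{0}}$ almost surely. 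To see that this is the \emph{correct} classification I would invoke the law of large numbers: $\rho(X)=p$ almost surely, so for $p<1/2$ the majority symbol is indeed $\symb{0}$.

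The regime $p$ near $1$ is symmetric. Here I would look at the error set relative to $\unif{\symb{1}}$, namely $\diff(\unif{\symb{1}},X)=\{i\in\ZZ: X_i=\symb{0}\}$, which is a Bernoulli random set with parameter $1-p$; applying Theorem~\ref{thm:sparseness:Bernoulli} to it gives $\Phi^t X\to\unif{\symb{1}}$ almost surely whenever $1-p<(2k)^{-2}$, and $\rho(X)=p>1/2$ then makes $\symb{1}$ the majority. The endpoints $p=0$ and $p=1$ are trivial, since $X$ equals $\unif{\symb{0}}$ or $\unif{\symb{1}}$ almost surely.

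Finally, I would set $p_\critical\isdef(2k)^{-2}$ (or, more sharply, the critical value of Corollary~\ref{cor:sparseness:Bernoulli:phase-transition} for the parameter $k$, capped at $1/2$); in either case $p_\critical\in(0,1/2]$, and the two regimes together cover all $p\in[0,p_\critical)\cup(1-p_\critical,1]$. I do not anticipate a genuine obstacle, as the corollary is essentially an assembly of earlier results. The one point requiring care is the capping at $1/2$: strong sparseness of the error set relative to $\unif{\symb{0}}$ forces convergence to $\unif{\symb{0}}$ regardless of the density, so one must restrict to $p<1/2$ to guarantee that $\unif{\symb{0}}$ is actually the majority and hence that the sparseness-driven convergence coincides with the correct classification.
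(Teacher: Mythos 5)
Your proposal is correct and takes essentially the same route as the paper, which presents the corollary as a direct assembly of Theorem~\ref{thm:sparseness:Bernoulli} (applied with sparseness parameter $k=2rm$), the attraction-under-strong-sparseness discussion of Section~3, the $\symb{0}$--$\symb{1}$ symmetry, and the law of large numbers, with $p_\critical$ taken either as the explicit bound $(2k)^{-2}$ or as the phase-transition threshold of Corollary~\ref{cor:sparseness:Bernoulli:phase-transition} capped at $1/2$. Your closing remark about why the cap at $1/2$ is needed (sparseness forces convergence to $\unif{\symb{0}}$ irrespective of which symbol is actually in the majority) is precisely the right point of care.
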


For GKL and modified traffic,
we have $k=2rm=12$.
Therefore, Theorem~\ref{thm:sparseness:Bernoulli}
only guarantees correct classification
if the Bernoulli parameter $p$ is within distance
%$\left(2k(4k+3)\right)^{-2}=1224^{-2} %=1/1498176
%\approx 6.67\times 10^{-7}$
$(2k)^{-2}=24^{-2}\approx 0.0017$
from either $0$ or $1$.

%\section{Tailored Bound}

\section{Discussion}
We conclude with few comments and questions.

Corollary~\ref{cor:classification:restricted} shows that
the asymptotic behaviour of the GKL and modified traffic automata
starting from a Bernoulli random configuration undergoes
a phase transition: the cellular automaton converges to $\unif{\symb{0}}$
for $p$ close to $0$ and to $\unif{\symb{1}}$ for $p$ close to $1$.
It remains open whether the transition occurs
precisely at $p=1/2$, or if there are other transitions in between.
The result of Bu\v{s}i\'c et al.{}~\cite{BusFatMaiMar13}
shows that the transition in the NEC cellular automaton
is unique and happens precisely at $p=1/2$.
%As a contrasting example, there is the XOR cellular automaton
%$\Phi:\{\symb{0},\symb{1}\}^\ZZ\to\{\symb{0},\symb{1}\}^\ZZ$
%for which it is known that ``no phase transition occurs''
%when the automaton is started with a Bernoulli random configuration:
%the system randomizes any Bernoulli configuration with parameter $p\in(0,1)$.
%Are there examples of reversible cellular automata
%that undergo a phase transition when started with a Bernoulli random configuration?

%The concept of sparseness used here is quite generic
%and disregards the details of the models.
%Can we devise a tailored concept of sparseness for GKL or modified traffic
%that lead to better classification bounds?
%Also, given the strength and potential of the sparseness 

Another open issue is the behaviour of the GKL and modified traffic automata
on random configurations with non-Bernoulli distributions.
One might expect the sparseness argument to extend to measures
that are sufficiently mixing.
For instance, it should be possible to show the same kind of
classification on a Markov random configuration
that has density close to $0$ or~$1$.

It would also be interesting to see if the sparseness method can be applied to
probabilistic cellular automata that are suggested for the density classification task.
Fat\`es~\cite{Fat13} has introduced a parametric family of one-dimensional
probabilistic cellular automaton with a density classification property:
for every $n\in\NN$ and $\varepsilon>0$, there is a setting of the parameter
such that the automaton classifies
a periodic configuration with period $n$ with probability at least $1-\varepsilon$.
Does the majority-traffic rule of Fat\`es with a fixed parameter
classify sufficiently biased Bernoulli random configurations?
A two-dimensional candidate
would be the noisy version of the nearest-neighbor majority rule,
in which the noise occurs only when there is no consensus in the neighborhood.

Finally, given its various applications, one might try to
study the notion of sparseness in a more systematic fashion,
trying to capture more details about the transition.
It is curious that the notion of sparseness of Bernoulli random sets
supports a hierarchy of phase transitions, even in one dimension
where the standard notion of percolation fails.

\subsubsection*{Acknowledgments.}
%The work of ST is
Research supported by ERC Advanced Grant 267356-VARIS of Frank den Hollander.
I would like to thank Jarkko Kari
for suggesting this problem and for discussions that lead to this paper.

%\nocite*

\bibliographystyle{splncs03}
\bibliography{bibliography}

%\section*{Checklist of Items to be Sent to Volume Editors}
%Here is a checklist of everything the volume editor requires from you:
%
%
%\begin{itemize}
%\settowidth{\leftmargin}{{\Large$\square$}}\advance\leftmargin\labelsep
%\itemsep8pt\relax
%\renewcommand\labelitemi{{\lower1.5pt\hbox{\Large$\square$}}}
%
%\item The final \LaTeX{} source files
%\item A final PDF file
%\item A copyright form, signed by one author on behalf of all of the
%authors of the paper.
%\item A readme giving the name and email address of the
%corresponding author.
%\end{itemize}

\end{document}